\DeclareFontFamily{OT1}{pzc}{}
\DeclareFontShape{OT1}{pzc}{m}{it}%
              {<-> s * [1.2] pzcmi7t}{}
\DeclareMathAlphabet{\mathpzc}{OT1}{pzc}%
                                 {m}{it}
\newtheorem{thm}{Theorem}[section]
\newtheorem*{unnamedthm}{Theorem}
\newtheorem{cor}[thm]{Corollary}
\newcommand{\Z}{\mathbb{Z}}      
\newcommand{\Q}{\mathbb{Q}}      
\newcommand{\R}{\mathbb{R}}			 
\newcommand{\Qfield}[1]{\Q(\sqrt{#1})}
\newcommand{\OQfield}[1]{\mathcal{O}_{\Qfield{#1}}}
\title{Walks on Primes in Imaginary Quadratic Fields}
\author{Siddharth Prasad}
\email{spmdccxxix (at) gmail (dot) com}
\begin{document}

\begin{abstract}
We generalize the Gaussian moat problem to the finitely many imaginary quadratic fields exhibiting unique factorization in their integers. We map this problem to the Euclidean minimum spanning tree problem and use Delaunay triangulation and Kruskal's algorithm to find moats  in these fields in $O(|\mathbf{P}| \log |\mathbf{P}|)$ time, where $\mathbf{P}$ denotes the set of primes we are searching through. Our algorithm uses  symmetry and boundaries to correctly and efficiently identify all moats, in order of increasing length, along with the farthest prime reached for each moat, in an underlying unbounded graph of infinite primes. We also derive asymptotic estimates on the density of primes in certain imaginary quadratic fields. Based on this and symmetry arguments, we conjecture that for a given step bound $k$, we can travel farther away from the origin in $\Z[i]$ and $\Z\left[\frac{-1+\sqrt{-3}}{2}\right]$ than in $\Z[\sqrt{-2}]$. This conjecture is corroborated by our computational results.
\end{abstract}

\maketitle

\section{Introduction}

\subsection{Moat Problem}
The integer moat problem asks whether it is possible to walk to infinity by taking steps of bounded length on prime numbers in $\Z$ (called rational primes). This is not possible since $\{(n+1)!+i\}_{i=2}^{i=n+1}$ gives $n$ consecutive composite integers. Equivalently, $\lim\sup\{p_{n+1}-p_{n}\}=\infty$.

The Gaussian moat problem deals with a similar walk to infinity by taking steps on Gaussian primes, which are defined below. In general, the existence of a $k$-moat refers to the fact that it is not possible to walk to infinity with step size at most $k$ (measured by distance on the complex plane). The Gaussian moat problem, which asks whether or not there exist $k$-moats in the Gaussian integers for arbitrarily large $k$ was posed by Basil Gordon in 1962 and remains unsolved.

The first computational exploration of this problem was by Jordan and Rabung in 1969, who constructed $1$, $\sqrt{2}$, $2$, $\sqrt{8}$ and $\sqrt{10}$-moats~\cite{jordan}. In 1998, Gethner et. al. made further computational progress by constructing a $4$ and $\sqrt{18}$-moat and showing the existence of a $\sqrt{26}$-moat~\cite{gethner}. Most recently, in 2004, Tsuchimura further established the existence of $\sqrt{32}$, $\sqrt{34}$, and $6$-moats~\cite{japan}. 

Although the original question remains unsolved, the Gaussian moat problem has spurred interesting discussion on other relevant questions. For example, \cite{gethner} shows that there does not exist a bounded walk to infinity on Gaussian primes constrained to any single line in the complex plane.

\subsection{Quadratic Fields}
A quadratic field is a degree $2$ extension of $\Q$. Let $d\neq 1$ be a square-free integer. Then, the sets of the form $$\Qfield{d}=\{a+b\sqrt{d}:a,b\in\Q\}$$ are the quadratic fields with $\Q$-basis $\{1,\sqrt{d}\}$. Let $\OQfield{d}\subset \Qfield{d}$ denote the ring of algebraic integers in $\Q(\sqrt{d})$. The following well known theorem characterizes this ring of integers.

\begin{unnamedthm}If $d\equiv 1\pmod{4}$, then $\OQfield{d}=\Z\Bigl[\frac{-1+\sqrt{d}}{2}\Bigr]$. Otherwise, $\OQfield{d}=\Z[\sqrt{d}]$. \end{unnamedthm}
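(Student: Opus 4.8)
The plan is to characterize algebraic integers in $\Qfield{d}$ directly through their minimal polynomials and then split into cases according to $d \bmod 4$. First I would invoke the standard fact that an element of $\Qfield{d}$ is an algebraic integer precisely when its minimal polynomial over $\Q$ lies in $\Z[x]$; this rests on the observation that $\Z$ is integrally closed in $\Q$, so any rational algebraic integer is already a rational integer. Writing a typical element as $\alpha = a + b\sqrt{d}$ with $a,b \in \Q$, the case $b = 0$ is immediate, and for $b \neq 0$ the minimal polynomial is $x^2 - 2ax + (a^2 - db^2)$, since the Galois conjugate $a - b\sqrt{d}$ is the second root. Thus $\alpha \in \OQfield{d}$ if and only if the trace $2a$ and the norm $a^2 - db^2$ are both rational integers.

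The key step is to bound the denominators of $a$ and $b$. Setting $2a = m \in \Z$, the norm condition gives $m^2 - 4db^2 \in 4\Z$, so in particular $4db^2 \in \Z$. Writing $b = s/t$ in lowest terms, I would deduce $t^2 \mid 4d$, and here the squarefreeness of $d$ is essential: no odd prime can divide $t$ (else its square would divide $d$), and $4 \nmid t$ (else $16 \mid 4d$ would force $4 \mid d$), so $t \in \{1,2\}$ and $2b = n \in \Z$ as well. The norm condition then collapses to the single congruence $m^2 \equiv dn^2 \pmod{4}$.

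Finally I would case-split on the residue of $d$ modulo $4$, noting that squarefreeness rules out $d \equiv 0$ and that every square is $0$ or $1$ modulo $4$. When $d \equiv 1$, the congruence $m^2 \equiv n^2 \pmod{4}$ is equivalent to $m \equiv n \pmod{2}$, and the resulting set $\{(m + n\sqrt{d})/2 : m \equiv n \pmod{2}\}$ is exactly $\Z\bigl[\frac{-1+\sqrt{d}}{2}\bigr]$, as one checks by matching $m = 2x - y$ and $n = y$. When $d \equiv 2$ or $d \equiv 3$, a short inspection of the possible values of $m^2$ and $dn^2$ modulo $4$ forces both $m$ and $n$ to be even, i.e.\ $a,b \in \Z$, giving $\OQfield{d} = \Z[\sqrt{d}]$. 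The main obstacle is the denominator-bounding step, since it is the only place where squarefreeness genuinely enters and where care is needed to rule out stray prime factors in the denominator of $b$.
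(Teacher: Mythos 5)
The paper offers no proof of this statement at all --- it is presented as a ``well known theorem'' characterizing $\OQfield{d}$, with the surrounding material sourced to the Ireland--Rosen reference. So there is no in-paper argument to compare against; what can be said is that your proposal is the standard textbook proof and it is correct. The chain of reductions is sound: integrality of $\alpha=a+b\sqrt{d}$ is equivalent to $2a\in\Z$ and $a^2-db^2\in\Z$ via the minimal polynomial (the $b=0$ case handled by $\Z$ being integrally closed); the denominator bound $t\in\{1,2\}$ for $b=s/t$ follows correctly from $t^2\mid 4d$ together with squarefreeness of $d$; and the final congruence $m^2\equiv dn^2\pmod 4$ splits correctly by cases, with the identification $m=2x-y$, $n=y$ giving exactly $\Z\bigl[\frac{-1+\sqrt{d}}{2}\bigr]$ when $d\equiv 1\pmod 4$, and parity of squares mod $4$ forcing $m,n$ even when $d\equiv 2,3\pmod 4$. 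You correctly flag the denominator-bounding step as the one place where squarefreeness is genuinely used; no gaps.
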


The elements that make up the set $\OQfield{-1}=\Z[i]$ are known as the Gaussian integers, and those that make up the set $\OQfield{-3}=\Z\Bigl[\frac{-1+\sqrt{-3}}{2}\Bigr]$, commonly denoted as $\Z[\omega]$ are known as the Eisenstein integers.

$\Q[\sqrt{d}]$ does not admit a well defined notion of primality, since based on the usual definitions all non-zero elements of $\Qfield{d}$ are trivially prime and reducible.\footnote{In a set $S$, $p\in S$ is a prime element if $p|ab\Rightarrow p|a$ or $p|b$ for $a,b\in S$. An irreducible element is one that cannot be expressed as a product of non-units in $S$. In unique factorization domains, such as $\Z$ and those discussed later, prime elements and irreducible elements are identical.} Hence, we will ignore $\Qfield{d}$, and only study the associated integer ring $\OQfield{d}$. Henceforth, all discussion of any quadratic field refers to the associated integer ring.

We are specifically interested in imaginary quadratic fields ($d<0$), since these exhibit intuitive geometric properties based on the norm which is defined as $N(\alpha)=\alpha\bar\alpha$, where $\bar\alpha$ is the conjugate of $\alpha$ in $\Qfield{d}$. The norm is a multiplicative function. If $d\equiv 1\pmod{4}$, $N\Bigl(x+y\Bigl(\frac{-1+\sqrt{d}}{2}\Bigr)\Bigr)=x^2-xy-\frac{d-1}{4}y^2$, and if $d\equiv 2\pmod{4}$ or $d\equiv 3\pmod{4}$, then $N(x+y\sqrt{d})=x^2-dy^2$. For $d<0$, the norm is always positive. In particular, we study a subset of the imaginary quadratic fields that are unique factorization domains, proved by H. Stark to be those with $d=-1$, $-2$, $-3$, $-7$, $-11$, $-19$, $-43$, $-67$, and $-163$~\cite{ireland}.

\subsection{Our Contribution}
In this paper, we generalize the Gaussian moat problem to the finitely many imaginary quadratic fields exhibiting unique factorization in their integers. We also propose a new and efficient algorithm by mapping this problem to the Euclidean minimum spanning tree problem and using Delaunay triangulation and Kruskal's algorithm to find moats in these fields in $O(|\mathbf{P}|\log |\mathbf{P}|)$ time, where $\mathbf{P}$ denotes the set of primes we are searching through. While the underlying algorithms -- including those to identify the triangulation and the minimum spanning tree to find a path with the minimum maximal edge length -- are known, our algorithm appears to be the first to apply it explicitly to find all moats, in increasing order of length, along with the associated farthest prime. Further to deal with an underlying infinite unbounded graph with infinite nodes (primes), the algorithm uses symmetry and boundaries to correctly and efficiently identify moats.


We also derive asymptotic estimates on the density of primes in certain imaginary quadratic fields. Based on this and symmetry arguments we make, we conjecture that for a given step bound $k$, we can travel farther away from the origin in $\Z[i]$ and $\Z[\omega]$ than in $\Z[\sqrt{-2}]$. This conjecture is corroborated by our computational results and the fact that $\Z[i]$ and $\Z[\omega]$ have smaller discriminants than that of $\Z[\sqrt{-2}]$, since the former two rings contain a larger number of algebraic integers within a certain distance (and thus likely contain more primes).

\section{Methods}

\subsection{Prime Generation}

In order to generate primes in $\OQfield{d}$, we shall use the following important classifications of primes, the proofs of which can be found in \cite{ireland}. Let $\pi\in\OQfield{d}$.

\begin{thm}If $N(\pi)$ is a rational prime, $\pi$ is prime. \label{normprime} \end{thm}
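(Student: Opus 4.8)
The plan is to prove irreducibility and then convert it to primality: I will show that if $N(\pi)$ is a rational prime then $\pi$ is irreducible in $\OQfield{d}$, and invoke the fact (noted in the footnote, valid because each ring under consideration is a unique factorization domain) that irreducible elements and prime elements coincide. Two ingredients from the preceding discussion are what make this work: the norm $N$ is multiplicative, and for $d<0$ it takes positive integer values on $\OQfield{d}$, which is immediate from the explicit integer-coefficient formulas for $N$ given above.

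First I would establish the characterization of units: an element $\alpha\in\OQfield{d}$ is a unit if and only if $N(\alpha)=1$. For the forward direction, if $\alpha\beta=1$ then multiplicativity gives $N(\alpha)N(\beta)=N(1)=1$, and since both factors are positive integers this forces $N(\alpha)=1$. For the converse, $N(\alpha)=1$ means $\alpha\bar\alpha=1$; because the conjugate $\bar\alpha$ again lies in $\OQfield{d}$, this exhibits $\bar\alpha$ as an inverse of $\alpha$ inside the ring, so $\alpha$ is a unit.

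With this in hand the main argument is short. Suppose $N(\pi)=p$ is a rational prime and write $\pi=\alpha\beta$ with $\alpha,\beta\in\OQfield{d}$. Taking norms and using multiplicativity yields $p=N(\pi)=N(\alpha)N(\beta)$, an equation among positive integers. Since $p$ is prime in $\Z$, one of the factors must equal $1$; say $N(\alpha)=1$. By the unit characterization $\alpha$ is then a unit, so the factorization $\pi=\alpha\beta$ is trivial. Hence $\pi$ cannot be written as a product of two non-units, i.e. $\pi$ is irreducible, and therefore prime.

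I do not expect a genuine obstacle here; the entire content lies in the two standard facts about the norm. The one point requiring a little care is the unit characterization — specifically, verifying that $\bar\alpha\in\OQfield{d}$, so that an element of norm $1$ is genuinely invertible within the ring rather than merely in $\Qfield{d}$ — but this follows because conjugation permutes the two roots of the monic integer minimal polynomial of $\alpha$ and hence preserves the ring of integers.
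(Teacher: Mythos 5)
Your proof is correct and follows essentially the same route as the paper's source: the paper gives no proof of its own, deferring to Ireland--Rosen, where the standard argument is exactly this one (multiplicativity of the norm, the characterization of units as elements of norm $1$, hence irreducibility, hence primality in a UFD). The only point worth noting is that your passage from irreducible to prime genuinely uses the unique-factorization hypothesis, which is harmless here since the paper restricts attention to the nine imaginary quadratic UFDs, but you correctly flag that dependence rather than leaving it implicit.
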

Suppose $p$ is a rational prime. We say $p$ is inert if $p$ is also prime in $\OQfield{d}$. In order to classify which rational primes remain inert, we define the discriminant of a quadratic field, $\delta=\begin{cases}d & \text{ if } d\equiv 1\pmod{4} \\ 4d & \text{ else}\end{cases}$
\begin{thm}An odd rational prime $p$ is inert if $p\nmid\delta$ and $\left(\frac{\delta}{p}\right)=-1$. \label{inertprimes} \end{thm}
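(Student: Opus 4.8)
The plan is to prove the contrapositive: I will show that if the odd rational prime $p$ is \emph{not} inert, then $\left(\frac{\delta}{p}\right)\neq -1$. The hypothesis $p\nmid\delta$ guarantees that the Legendre symbol $\left(\frac{\delta}{p}\right)$ is well-defined and equals $\pm 1$, so establishing the contrapositive genuinely rules out the value $-1$.

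First I would use multiplicativity of the norm to manufacture an element of norm $p$. If $p$ is not inert, then in the unique factorization domain $\OQfield{d}$ the rational prime $p$ is reducible, say $p=\alpha\beta$ with $\alpha,\beta$ non-units. Applying $N$ and using $N(p)=p^2$ together with the fact that a non-unit in an imaginary quadratic field has norm strictly greater than $1$, the only possibility is $N(\alpha)=N(\beta)=p$. Hence there exists $\pi\in\OQfield{d}$ with $N(\pi)=p$.

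Next I would convert $N(\pi)=p$ into a congruence exhibiting $\delta$ as a quadratic residue, splitting into the two cases of the ring-of-integers characterization. When $d\not\equiv 1\pmod 4$, write $\pi=x+y\sqrt{d}$, so $x^2-dy^2=p$, and reduce modulo $p$ to get $x^2\equiv dy^2$. I must exclude $p\mid y$: if $p\mid y$ then $p\mid x$ as well, forcing $p^2\mid x^2-dy^2=p$, impossible. Thus $y$ is invertible modulo $p$, giving $d\equiv (xy^{-1})^2$, so $\left(\frac{d}{p}\right)=1$; since $\delta=4d$ and $\left(\frac{4}{p}\right)=1$ this yields $\left(\frac{\delta}{p}\right)=1$. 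When $d\equiv 1\pmod 4$, write $\pi=x+y\bigl(\frac{-1+\sqrt{d}}{2}\bigr)$ and clear the denominator in the norm formula, obtaining $(2x-y)^2-dy^2=4p$; reducing modulo $p$ gives $(2x-y)^2\equiv dy^2$, and the same argument (with $p^2\mid 4p$ contradicting $p$ odd) excludes $p\mid y$, so $d=\delta$ is again a quadratic residue. In either case $\left(\frac{\delta}{p}\right)=1\neq -1$, completing the contrapositive.

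The main obstacle I anticipate is bookkeeping rather than conceptual depth: correctly clearing the factor of $4$ in the $d\equiv 1\pmod 4$ norm formula and verifying that it is $\delta$ itself, rather than merely $d$, whose residue symbol appears, together with the small but essential lemma ruling out $p\mid y$ in each case. An alternative route is the Kummer--Dedekind factorization theorem: since $\OQfield{d}=\Z[\theta]$ is monogenic with minimal polynomial of discriminant exactly $\delta$, the prime $p$ is inert precisely when that polynomial is irreducible modulo $p$, which for odd $p\nmid\delta$ happens exactly when $\left(\frac{\delta}{p}\right)=-1$. I prefer the norm-based argument here because it relies only on the multiplicativity of $N$ and the existence of unique factorization, both already available.
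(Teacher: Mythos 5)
Your argument is correct and complete: producing an element $\pi$ with $N(\pi)=p$ from the factorization of a non-inert $p$ in the UFD $\OQfield{d}$, ruling out $p\mid y$ in each case, and handling the bookkeeping between $\delta=4d$ and $\delta=d$ all check out, and the contrapositive framing (with $p\nmid\delta$ held fixed) is logically sound. The paper offers no proof of this theorem at all---it only cites Ireland and Rosen---and your norm-form argument is essentially the standard one found in that reference, so there is nothing substantive to contrast.
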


Finally, we deal with the rational prime $2$ below.

\begin{thm}$2$ is prime in $\OQfield{d}$ if $2\nmid \delta$ and $d\equiv 5\pmod{8}$\label{twoprime} \end{thm}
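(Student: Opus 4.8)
The plan is to reduce the statement to the factorization of the minimal polynomial of a generator of $\OQfield{d}$ modulo $2$. First I would note that the hypothesis $2\nmid\delta$ already pins down the shape of the ring: if $d\equiv 2$ or $3\pmod 4$ then $\delta=4d$ is even, so $2\nmid\delta$ forces $d\equiv 1\pmod 4$, whence $\delta=d$ and $\OQfield{d}=\Z[\theta]$ with $\theta=\frac{-1+\sqrt d}{2}$. The congruence $d\equiv 5\pmod 8$ is consistent with this, since $5\equiv 1\pmod 4$, so it is really a statement about $\delta=d\pmod 8$.

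My main approach would be via Dedekind's factorization theorem. The generator $\theta$ has minimal polynomial $m(x)=x^2+x+\frac{1-d}{4}$, and because $\Z[\theta]=\OQfield{d}$, the splitting of the rational prime $2$ mirrors the factorization of $m(x)$ modulo $2$. Since $d\equiv 5\pmod 8$ gives $1-d\equiv 4\pmod 8$, the constant term $\frac{1-d}{4}$ is odd, so $m(x)\equiv x^2+x+1\pmod 2$. This polynomial has no root in $\mathbb{F}_2$ (both $0$ and $1$ evaluate to $1$), hence is irreducible, and therefore $\OQfield{d}/(2)\cong\mathbb{F}_2[x]/(x^2+x+1)\cong\mathbb{F}_4$ is a field. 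Thus $(2)$ is maximal, hence prime, and $2$ is inert. A more self-contained variant, using only the explicit norm formula already recorded in the excerpt, is to suppose $2=\alpha\beta$ with $\alpha,\beta$ non-units; multiplicativity gives $N(\alpha)N(\beta)=N(2)=4$, and since non-units have norm at least $2$ we would need $N(\alpha)=2$. Reducing the norm form $x^2-xy-\frac{d-1}{4}y^2=2$ modulo $2$, and again using that $\frac{d-1}{4}$ is odd, turns it into $x^2+xy+y^2\equiv 0\pmod 2$, which forces $x,y$ both even; but then $4\mid N(\alpha)$, contradicting $N(\alpha)=2$.

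The one genuinely subtle point, and the main thing to get right, is the passage from ``$2$ admits no nontrivial factorization into elements'' (irreducibility) to ``$2$ is prime.'' In an arbitrary ring of integers these notions differ: when $2$ ramifies it can be irreducible yet generate a non-prime ideal, so the bare norm computation only rules out elements of norm $2$ and does not by itself settle primality. I would close this gap either by invoking that the rings under study are unique factorization domains, where irreducible and prime coincide as noted in the footnote, or, more robustly, by presenting the quotient computation $\OQfield{d}/(2)\cong\mathbb{F}_4$, which establishes that $(2)$ is a prime ideal directly and is therefore the cleaner route to record.
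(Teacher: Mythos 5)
The paper does not actually prove this statement: it is quoted as one of several classification theorems whose ``proofs \ldots{} can be found in \cite{ireland},'' so there is no internal argument to compare against. Your proof is correct and complete on its own terms. The reduction of $2\nmid\delta$ to $d\equiv 1\pmod 4$ and $\delta=d$ is right; the minimal polynomial $x^2+x+\frac{1-d}{4}$ of $\theta=\frac{-1+\sqrt{d}}{2}$ does reduce to $x^2+x+1$ mod $2$ exactly when $d\equiv 5\pmod 8$, and since $\OQfield{d}=\Z[\theta]$ the quotient $\OQfield{d}/(2)\cong\mathbb{F}_2[x]/(x^2+x+1)\cong\mathbb{F}_4$ is a field, so $(2)$ is prime --- which is precisely the element-level statement that $2$ is prime, with no appeal to unique factorization needed. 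Your secondary norm-form argument is also sound for $d<0$ (where $N\ge 0$ and $N=1$ characterizes units), and you correctly identify its weakness: it only yields irreducibility, which does not imply primality in a general ring of integers, so the quotient computation is indeed the right version to record. This is the standard textbook proof (essentially the one in Ireland--Rosen), so you have supplied exactly the argument the paper outsources.
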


Of the negative values of $d$ such that $\OQfield{d}$ is a unique factorization domain, only $-1, -2\not\equiv 1\pmod{4}$. If $d\equiv 1\pmod{4}$, then $d=4k+1$ for some integer $k$. By the Quadratic Reciprocity law, $$\left(\frac{\delta}{p}\right)=\left(\frac{d}{p}\right)=\left(\frac{p}{d}\right)(-1)^{(p-1)(d-1)/4}=\left(\frac{p}{d}\right)(-1)^{k(p-1)}=\left(\frac{p}{d}\right),$$ where $\left(\frac{a}{m}\right)$ is the Kronecker symbol. By $\eqref{inertprimes}$, we must have $\left(\frac{\delta}{p}\right)=\left(\frac{p}{d}\right)=-1$ for $p$ to be inert, so $p$ must equivalently be congruent to a quadratic non-residue modulo $|d|$. Checking this greatly reduces computational complexity from explicitly computing the symbol $(a/p)\equiv a^{(p-1)/2}\pmod{p}$.

The classification of odd inert rational primes for $d=-1$ and $d=-2$ can be derived separately with the Quadratic Reciprocity law and is well known.

Figures \eqref{fig:sub1} and \eqref{fig:sub2} were generated using the above classifications.

\begin{figure}[h]
\centering
\begin{subfigure}{.49\textwidth}
  \centering
  \includegraphics[width=.98\linewidth]{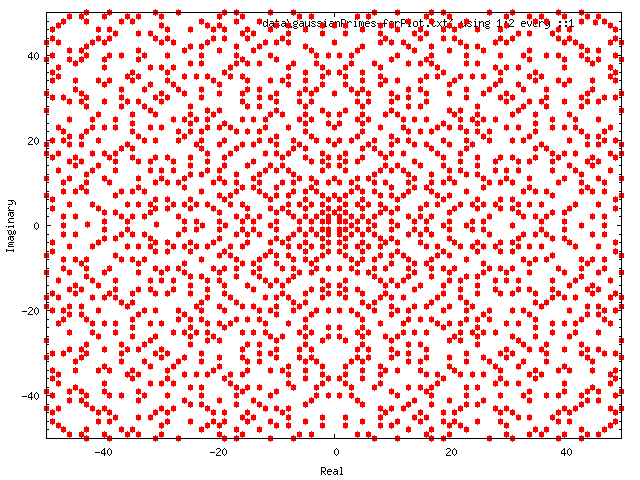}
  \caption{$\Z[i]$}
  \label{fig:sub1}
\end{subfigure}
\begin{subfigure}{.49\textwidth}
  \centering
  \includegraphics[width=.98\linewidth]{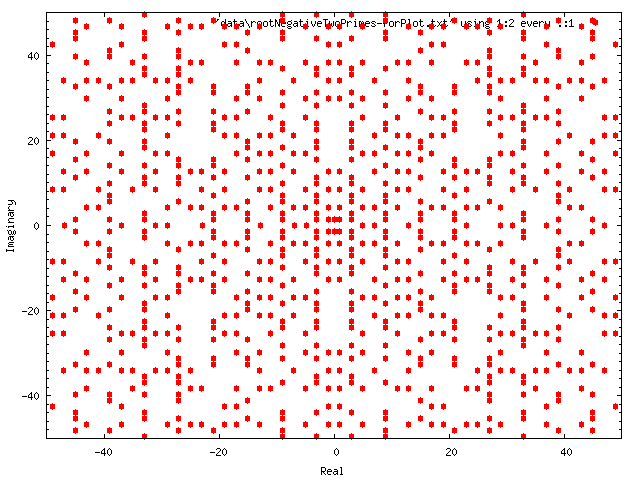}
  \caption{$\Z[\sqrt{-2}]$}
  \label{fig:sub2}
\end{subfigure}
\caption{Primes in $\Z[i]$ and $\Z[\sqrt{-2}]$}
\label{fig:test}
\end{figure}

\subsection{Moat Identification}

In this section we describe our algorithm for identifying moats.

\subsubsection{Representation of Primes}

We shall represent the primes as nodes in the Cartesian plane as per the mapping discussed below.

Suppose $\OQfield{d}=\Z[\tau]$. Let $a+b\tau\in\Z[\tau]$. In $\Z[\tau]$, a point $a+b\tau$ may be represented by the ordered pair $(a,b)$. Thus, we have the following mapping from $\Z[\tau]\to\R^2$. $$(a,b)\mapsto \left(a, b\sqrt{-d}\right) \text{ when } d=-2 \text{ and } (a,b)\mapsto \left(a-\frac{b}{2}, \frac{b}{2}\sqrt{-d}\right) \text{ otherwise}.$$ Once these transformations are applied, we can treat the primes as regular points in the Cartesian plane.

\subsubsection{Symmetry in $\OQfield{d}$}

In order to narrow the region in which we search for primes, we use the following theorem \cite{ireland}.

\begin{unnamedthm}\label{unitgroup}$\Z[i]^{\times}=\{\pm 1, \pm i\}$, $\Z[\omega]^{\times}=\{\pm 1, \pm\omega, \pm\omega^2\}$, and $\OQfield{d}^{\times}=\{\pm 1\}$ for $d=-2$ or $d<-3$, where $R^{\times}$ denotes the group of units in $R$. \end{unnamedthm}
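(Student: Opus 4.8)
The plan is to reduce the entire statement to solving a single Diophantine equation per field, namely $N(u)=1$, and then to read off the integer solutions. The first and central step is to establish the characterization
\[
u\in\OQfield{d}^{\times}\iff N(u)=1.
\]
For the forward direction, if $u$ is a unit with inverse $v\in\OQfield{d}$, then multiplicativity of the norm gives $N(u)N(v)=N(1)=1$; since $d<0$ forces $N$ to take nonnegative integer values on $\OQfield{d}$ (the displayed norm formulas are integer-valued quadratic forms in $x,y$, using that $\tfrac{d-1}{4}\in\Z$ when $d\equiv1\pmod 4$), both $N(u)$ and $N(v)$ are positive integers whose product is $1$, hence $N(u)=1$. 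For the converse, if $N(u)=u\bar u=1$ then $\bar u$ is a two-sided inverse, and $\bar u$ again lies in $\OQfield{d}$ because the ring is closed under conjugation. I expect this equivalence to be the conceptual heart of the argument; everything after it is finite casework.

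Next I would solve $N(u)=1$ using the explicit norm formulas from the previous subsection, splitting on the residue of $d$ modulo $4$. When $d\equiv 2,3\pmod 4$ (the cases $d=-1$ and $d=-2$ here) the equation reads $x^2+|d|y^2=1$: for $|d|=1$ this is $x^2+y^2=1$ with the four solutions $(\pm1,0),(0,\pm1)$, yielding $\{\pm1,\pm i\}$, while for $|d|\ge 2$ the term $|d|y^2$ forces $y=0$ and hence $u=\pm1$. When $d\equiv 1\pmod 4$ I would first complete the square, obtaining
\[
4\,N\!\left(x+y\,\tfrac{-1+\sqrt{d}}{2}\right)=(2x-y)^2+|d|\,y^2,
\]
so that $N(u)=1$ becomes $(2x-y)^2+|d|y^2=4$. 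For $|d|\ge 7$ any nonzero $y$ makes the left side exceed $4$, forcing $y=0$ and $u=\pm1$; the only remaining case is $|d|=3$, i.e.\ $d=-3$.

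Finally I would treat the Eisenstein case $d=-3$ explicitly: the equation $(2x-y)^2+3y^2=4$ has exactly the six solutions $(x,y)\in\{(\pm1,0),(0,\pm1),(1,1),(-1,-1)\}$, corresponding to the elements $\pm1,\pm\omega,1+\omega,-1-\omega$. The one point requiring care — and the only place where the bookkeeping is not entirely mechanical — is matching these to the claimed set $\{\pm1,\pm\omega,\pm\omega^2\}$: using $\omega^2+\omega+1=0$ one has $\omega^2=-(1+\omega)$, so $1+\omega=-\omega^2$ and $-1-\omega=\omega^2$, which recovers exactly $\{\pm1,\pm\omega,\pm\omega^2\}$. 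Collecting the cases shows $\OQfield{d}^{\times}=\{\pm1\}$ whenever $d=-2$ or $d<-3$ (whether $d\equiv1$ or $d\equiv2,3\pmod 4$), with the two exceptional larger groups occurring only at $d=-1$ and $d=-3$, completing the proof.
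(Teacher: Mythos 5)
Your proof is correct: the reduction of the unit condition to $N(u)=1$ via multiplicativity and closure under conjugation, the completing-the-square identity $4N = (2x-y)^2 + |d|y^2$ for $d\equiv 1\pmod 4$, and the finite casework (including the six Eisenstein solutions and the identification $\omega^2=-(1+\omega)$) are all accurate. The paper itself offers no proof of this statement --- it simply cites Ireland and Rosen --- and your argument is the standard self-contained one found there, so there is nothing substantive to contrast.
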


Since multiplication by a unit and conjugation both preserve primality, the primes in $\Z[i]$ exhibit $4\times 2 = 8$ fold symmetry, those in $\Z[\omega]$ exhibits $6\times 2 = 12$ fold symmetry, and those in the other quadratic fields with unique factorization in their integers exhibit $2\times 2=4$ fold symmetry.

Thus, in $\Z[i]$, we restrict our search to regions bounded by the $x$-axis, the line $y=x$, and the $x$-boundary line $x=C$, for some constant $C$. In $\Z[\omega]$, we search in the region bounded by the $x$-axis, the line $y=\frac{1}{\sqrt{3}}x$, and the $x$-boundary line $x=C$, for some constant $C$. In the other quadratic fields we must examine an entire quadrant, so we use the region bounded by the $x$-axis, the $y$-axis, the $x$-boundary line $x=C$, and the $y$-boundary line $y=C$, for some constant $C$.

\subsubsection{Mapping from $\R^2$ to $\OQfield{d}$}

Here we describe how the previously mentioned mapping from $\OQfield{d}\to\R^2$ affects the lines of symmetry derived in the previous section.

The case of $\Z[\sqrt{-2}]$ is straightforward: the lines $x=C$ and $y=C$ in the Cartesian plane get mapped to the lines $x=C$ and $y=\frac{1}{\sqrt{2}}C$ in $\Z[\sqrt{-2}]$, respectively.

When $d\neq -2$, the map described above is equivalent to the linear transformation $\mathcal{T}:\OQfield{d}\to\R^2$ defined as $$\mathcal{T}(\vec{r})=A\cdot\vec{r}, \text{ where } A=\renewcommand\arraystretch{1}\begin{pmatrix}1 & -\tfrac{1}{2} \\ 0 & \tfrac{\sqrt{d}}{2}\end{pmatrix},$$ where $\OQfield{d}$ may be thought of as a $\Z$-module with basis $\left\{1,\frac{-1+\sqrt{d}}{2}\right\}$ if $d\equiv 1\pmod{4}$ and $\{1,\sqrt{d}\}$ otherwise. The matrix $A$ has a nonzero determinant, so $\mathcal{T}$ is invertible. $\mathcal{T}$ maps a line in $\OQfield{d}$ of the form $\vec{r}(t) = t\cdot\vec{r}+\vec{r_0}$ to the line $t\cdot(A\cdot\vec{r})+A\cdot\vec{r_0}$ in $\R^2$. Thus, we may apply the inverse linear transformation $\mathcal{T}^{-1}$ to find the lines in $\OQfield{d}$ corresponding to the lines of symmetry in the Cartesian plane. 

It follows that $\mathcal{T}^{-1}$ maps the lines $y=\frac{1}{\sqrt{3}}x$, $x=C$, and $y=0$ in the Cartesian plane to the lines $y=\frac{1}{2}x$, $y = 2x-2C$, and $y=0$ in the Eisenstein plane, respectively.

Due to the above theorem on the size of unit groups, the planes described by the other quadratic fields $\OQfield{d}$, $d<-3$ exhibit only $4$-fold symmetry. Using the inverse linear transformation $\mathcal{T}^{-1}$ developed above, the lines $x=0$, $x=C$, and $y=C$ in the Cartesian plane get mapped to the lines $y=2x$, $y=2x-2C$ and $y=\frac{2h}{\sqrt{|d|}}$ in the $\OQfield{d}$, respectively, for the remaining values of $d$.

The primes that we generate are within the symmetry-based bounds discussed above.

\subsubsection{Algorithm}

We now describe the algorithm to find moats. 

We define an undirected connected Euclidean graph with the primes representing nodes using the mapping defined in the previous section and edges represented by straight line paths between any two nodes. The cost of an edge connecting two primes $\pi_1$ and $\pi_2$ is the distance between them i.e., $\sqrt{N(\pi_1-\pi_2)}$.

Define the \textit{starting prime} as the prime with the lowest norm, the \textit{frontier prime} as the one with the largest norm discovered so far that is connected to the starting prime, the \textit{rightmost prime} as the one closest to the $x$-boundary, and the \textit{topmost prime} as the one closest to the $y$-boundary. Note that due to symmetry arguments discussed above, in $\Z[i]$ and $\Z[\omega]$, it is not necessary to consider the $y$-boundary and hence we need not define the topmost prime. 


\begin{unnamedthm}
Consider an unbounded Euclidean graph (i.e., without any $x$-boundary or $y$-boundary) containing the primes and edges as described above. Construct a minimum spanning tree (MST) \cite{lewis} containing the starting prime and the frontier prime. The moat that will be encountered in going from the starting prime to the frontier prime is the same as the second largest edge cost in the MST-based path from the starting prime to the frontier prime. Further, in a bounded graph with an $x$-boundary and/or a $y$-boundary, if the primes closest to these boundaries are at a distance from the boundary that is greater than the moat then the moat is a valid moat in the unbounded graph (without these boundaries). 

\end{unnamedthm}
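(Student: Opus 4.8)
The plan is to separate the statement into two logically independent claims and prove each with tools from minimum--spanning--tree theory. The first claim identifies the moat with a bottleneck (minimax) path cost; the second is a truncation argument showing that the infinite graph may be replaced by a bounded one without changing the answer. For the first claim I would begin by making precise what ``the moat encountered in going from the starting prime $s$ to the frontier prime $f$'' means: a walk using steps of length at most $k$ from $s$ to $f$ exists if and only if $k$ is at least the bottleneck value $B(s,f) := \min_{\text{paths } s\to f}\ \max_{e\in\text{path}}\mathrm{cost}(e)$, so the moat is exactly $B(s,f)$.

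The key lemma is the standard fact that $B(s,f)$ equals the maximum edge cost on the unique $s$--$f$ path in any MST, which I would establish via the cycle property. Let $e$ be a maximum-cost edge on the MST path between $s$ and $f$; deleting $e$ splits the tree into components $S\ni s$ and $T\ni f$, since $e$ lies on the tree path joining them. If some edge $f'$ crossing this cut had cost below $\mathrm{cost}(e)$, then adding $f'$ to the MST would create a cycle that must also contain $e$ (as $e$ is the only tree edge across the cut), and the cycle property forces the non-tree edge $f'$ to be the costliest edge on that cycle, giving $\mathrm{cost}(f')\ge \mathrm{cost}(e)$, a contradiction. Hence every $s$--$f$ path, which must cross the cut $(S,T)$, has bottleneck at least $\mathrm{cost}(e)$, while the MST path attains it; therefore $B(s,f)=\mathrm{cost}(e)$.

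The ``second largest'' refinement is the delicate point, and I expect it to be the main obstacle. It should follow from the definition of $f$ as the \emph{farthest} reachable prime: I would argue, by tracking the sequence of frontier primes produced as the Kruskal threshold increases, that each newly reached frontier is attached to the already-built component by an edge strictly larger than every edge used earlier. Consequently the single largest edge on the $s$--$f$ path is precisely the terminal step that first attaches $f$, so the largest gap one is \emph{forced} to cross in transit through the established clusters --- the moat proper --- is the next, i.e.\ the second-largest, edge cost. Making this ordering claim fully rigorous (rather than treating ``second largest'' as a bare convention) is where the care is needed.

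For the second claim I would argue by locality. If the nearest prime to a boundary line lies at distance exceeding the moat value $M$, then \emph{every} prime in the search region lies at distance greater than $M$ from that line. Since any two points on opposite sides of a line are separated by at least the sum of their distances to the line, every edge joining an in-region prime to an out-of-region prime has cost greater than $M$. Any $s$--$f$ path in the unbounded graph that leaves the region must include such a crossing edge and hence has bottleneck greater than $M$, so it cannot improve on the in-region minimax value $M$. By the first claim the global bottleneck therefore equals $M$, so the computed moat is valid in the unbounded graph; the same crossing argument shows that no prime beyond the boundary is reachable within budget $M$, so the reported farthest prime is correct as well.
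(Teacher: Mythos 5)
Your proposal is correct and follows the same two-part strategy as the paper's proof: identify the moat with the bottleneck (minimax) value attained by the MST path, using Kruskal's increasing edge order to read off the moats and frontier primes in sequence, and then argue that the artificial $x$- and $y$-boundaries cannot hide a shorter crossing edge. If anything you are more rigorous than the paper, which merely asserts that Kruskal's greedy construction makes the MST path minimize its maximum edge cost (your cut/cycle-property argument actually proves that key lemma, and your observation that two points on opposite sides of a line are separated by at least the sum of their distances to it is precisely what justifies the paper's check that $D_1>c$ and $D_2>c$); likewise, both you and the paper leave the ``second largest edge'' bookkeeping at the level of a plausibility argument rather than a fully formal claim.
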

\begin{proof}
In order to find the MST covering the starting prime and the frontier prime, we specifically use Kruskal's algorithm that greedily adds edges to the MST in increasing order of the edge lengths. Please refer to \cite{lewis} for a proof of Kruskal's algorithm.

In the path in the MST from the starting prime to the frontier prime, let us call the edge with the largest cost as $e_{\ell}$ and the edge with the second largest cost as $e_k$. At the point Kruskal's algorithm added $e_{\ell}$, we are guaranteed that all unused edges have a length greater than that of $e_{\ell}$.

This would appear to qualify $e_k$ to be a moat from the starting prime to the frontier prime. But, there is one more contention to reckon with. We are constraining our path based on the MST which has a goal of minimizing the overall MST cost and not necessarily minimize the moat. For this, we simply seek help from Kruskal's algorithm. As Kruskal's algorithm constructs a path from one node to another, it is guaranteed to use the shortest edges possible to build the path. Hence, an MST-based path between two nodes is guaranteed to also minimize the largest edge cost in the path between the nodes.

Hence, the existence of a $k$-moat is indicated by Kruskal's algorithm once an edge of cost greater than $k$ is added to the set of edges that will eventually form a minimum spanning tree including the starting prime. The frontier prime prior to the addition of this edge is the farthest prime that one can traverse to from the starting prime, if one is allowed to traverse edges of length $\leq k$. 

Now, for practical MST computation, we cannot handle an unbounded graph and must introduce the $x$-boundary and/or the $y$-boundary based on symmetry conditions, as discussed in previous sub-sections. Suppose the tree containing the starting prime contains edges of cost at most $c$. Once an edge of cost greater than $c$ is added by Kruskal's algorithm to the minimum spanning tree causing the discovery of a new frontier prime, then we have a possible $c$-moat. However, it is possible that if we included points outside the boundary, there could have been an edge with cost less than or equal to $c$, allowing a traversal farther out. In order to check this, we compute the distance $D_1$ from the rightmost prime to the $x$-boundary line $x=C$, and (in the fields with $2$-fold symmetry) the distance $D_2$ from the top-most prime to the $y$-boundary line $y=C$. If $D_1>c$ and $D_2>c$, we have indeed found a $c$-moat, otherwise we must extend the $x$-boundary and/or $y$-boundary and continue searching.
\end{proof}

\subsubsection{Euclidean MST}

For efficient computation of the Euclidean MST, we first compute the Delaunay triangulation, defined as the triangulation $\mathpzc{T}(\mathbf{P})$ for a set of points $\mathbf{P}$ such that there is no point in $\mathbf{P}$ that lies inside the circumcircle of any triangle in $\mathpzc{T}(\mathbf{P})$. This is checked using the fact that the point $D=(x_D, y_D)$ lies inside the circumcircle of $\triangle ABC$ ($A=(x_A, y_A), B=(x_B, y_B), C=(x_C, y_C)$) if and only if $$\det\begin{pmatrix}x_A-x_D & y_A-y_D & (x_A^2-x_D^2) + (y_A^2+y_D^2) \\[0.3em] x_B-x_D & y_B-y_D & (x_B^2-x_D^2) + (y_B^2+y_D^2)\\[0.3em] x_C-x_D & y_C-y_D & (x_C^2-x_D^2) + (y_C^2+y_D^2) \end{pmatrix} > 0.$$

In our program, we used the Java Topology Suite implementation of Delaunay triangulation~\cite{jts}. 

The proof that the Euclidean MST edges are a subset of the Delaunay triangulation edges can be found in \cite{preparata}

\subsubsection{Computation Complexity}

Computing $\mathpzc{T}(\mathbf{P})$ can be done in $O(|\mathbf{P}|\log |\mathbf{P}|)$ time. The Euclidean minimum spanning tree is a subtree of $\mathpzc{T}(\mathbf{P})$. $\mathpzc{T}(\mathbf{P})$ is guaranteed to contain $O(|\mathbf{P}|)$ edges, and thus computing the minimum spanning tree using $\mathpzc{T}(\mathbf{P})$ can be done in $O(|\mathbf{P}|\log |\mathbf{P}|)$ time with Kruskal's algorithm. This is a vast improvement over the $O(|\mathbf{P}|^2\log |\mathbf{P}|^2)$ time required by running Kruskal's algorithm on a complete graph over $\mathbf{P}$ which contains $\dbinom{|\mathbf{P}|}{2}$ edges~\cite{berg, sedgewick}.

Figure \eqref{fig:emst} displays a Euclidean minimum spanning tree (solid lines) and the Delaunay triangulation (solid and dashed lines) for Eisenstein primes.

\begin{figure}[htbp]
  \centering
  \includegraphics[width=.95\linewidth]{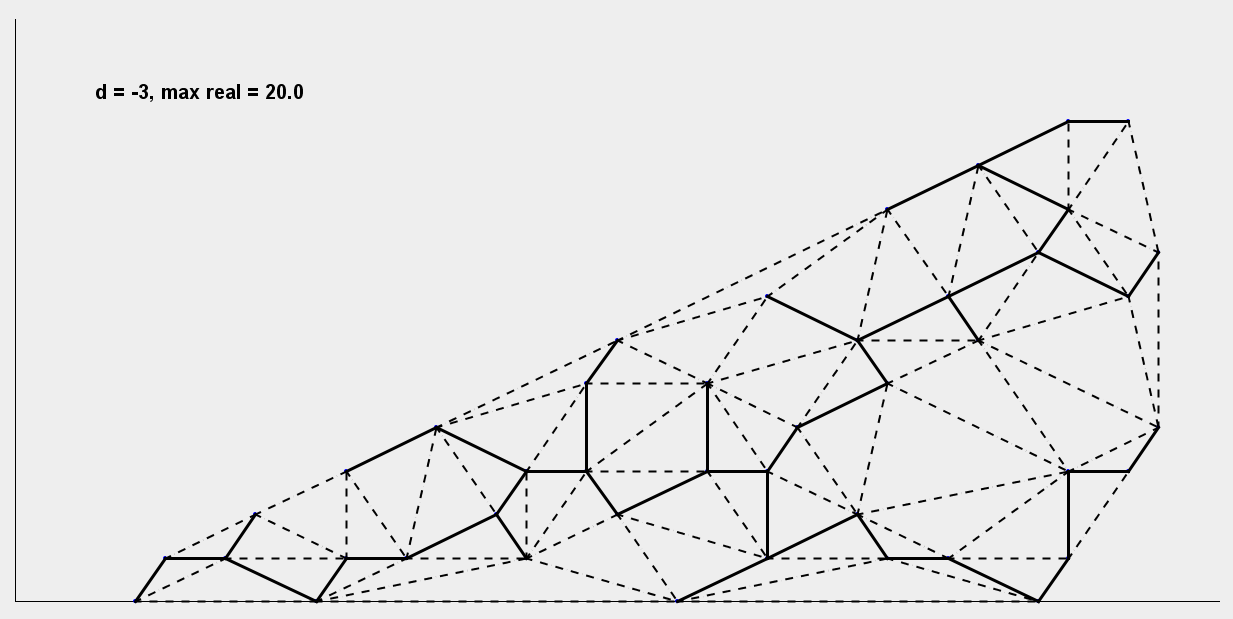}
  \caption{Euclidean MST in $\Z[\omega]$}
  \label{fig:emst}
\end{figure}

\section{Results}

We present below some of the moats found in various imaginary quadratic fields using the above algorithm.

\begin{table}	
\footnotesize
\centering 
\begin{tabular}{c c c} 
\hline 
$k$-moat & Farthest point reached & Farthest distance reached  \\ [0.8ex]  
\hline  
$1$ & $2+i$ & $2.23$  \\  
$\sqrt{2}$ & $11+4i$ & $11.705$  \\ 
$2$ & $42+17i$ & $45.310$  \\ 
$\sqrt{8}$ & $84+41i$ & $93.472$  \\ 
$\sqrt{10}$ & $976+311i$ & $1024.352$ \\
$4$ & $3297+2780i$ & $4312.61$ \\
$\sqrt{18}$ & $8174+6981i$ & $10749.355$ \\ [1ex] 
\hline 
\end{tabular} 
\caption{Moats in $\Z[i]$}
\label{table:-1} 
\end{table} 

\begin{table}
\footnotesize
\centering 
\begin{tabular}{c c c} 
\hline 
$k$-moat & Farthest point reached & Farthest distance reached  \\ [0.8ex]  
\hline  
\footnotesize$1$ & $1+\sqrt{-2}$ & $1.732$ \\  
$2$ & $3+2\sqrt{-2}$ & $4.123$  \\ 
$\sqrt{6}$ & $13+6\sqrt{-2}$ & $15.524$  \\ 
$\sqrt{12}$ & $25+42\sqrt{-2}$ & $64.444$  \\ 
$\sqrt{18}$ & $435+391\sqrt{-2}$ & $703.553$ \\
$\sqrt{22}$ & $105+653\sqrt{-2}$ & $929.432$ \\ 
$\sqrt{24}$ & $1531+555\sqrt{-2}$ & $1720.468$ \\
$\sqrt{32}$ & $1659+991\sqrt{-2}$ & $2171.737$ \\ 
$\sqrt{34}$ & $1423+2097\sqrt{-2}$ & $3289.338$ \\ [1ex] 
\hline 
\end{tabular} 
\caption{Moats in $\Z[\sqrt{-2}]$}
\label{table:-2} 
\end{table} 

\begin{table} 
\footnotesize
\centering 
\begin{tabular}{c c c} 
\hline 
$k$-moat & Farthest point reached & Farthest distance reached  \\ [0.8ex]  
\hline  
$1$ & $5+2\omega$ & $4.359$  \\  
$\sqrt{3}$ & $52+7\omega$ & $48.877$  \\ 
$2$ & $92+9\omega$ & $87.846$ \\ 
$\sqrt{7}$ & $535+49\omega$ & $512.261$ \\
$3$ & $2593+1120\omega$ & $2252.529$ \\
$\sqrt{12}$ & $20973+3518\omega$ & $19454.049$ \\[1ex] 
\hline 
\end{tabular} 
\caption{Moats in $\Z[\omega]$}
\label{table:-3} 
\end{table}

\begin{table}
\footnotesize
\centering 
\begin{tabular}{c c c} 
\hline 
$k$-moat & Farthest point reached & Farthest distance reached  \\ [0.8ex]  
\hline  
$\sqrt{2}$ & $1+2\tau$ & $2.646$  \\  
$2$ & $13+2\tau$ & $12.288$  \\ 
$\sqrt{8}$ & $37+50\tau$ & $67.224$ \\ 
$4$ & $419+326\tau$ & $501.517$ \\
$\sqrt{28}$ & $379+714\tau$ & $944.789$ \\
$\sqrt{32}$ & $17771+8308\tau$ & $17498.934$ \\[1ex] 
\hline 
\end{tabular} 
\caption{Moats in $\Z[\tau], \tau=\frac{-1+\sqrt{-7}}{2}$}
\label{table:-7} 
\end{table}

The graph below~\eqref{fig:moatplot} gives an idea of the distance reachable with a given moat size across the various imaginary quadratic fields analyzed.

\begin{center}
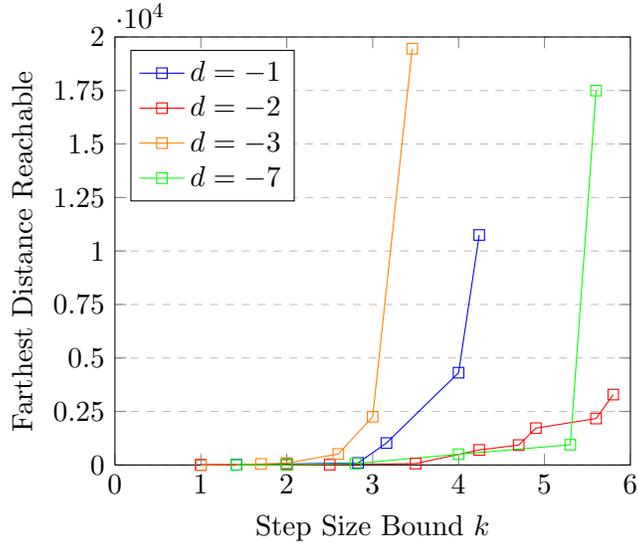
\begin{figure}
\begin{tikzpicture}
\begin{axis}[
    xlabel={Step Size Bound $k$},
    ylabel={Farthest Distance Reachable},
    xmin=0, xmax=6,
    ymin=0, ymax=20000,
    xtick={0,1,2,3,4,5,6},
    ytick={0,2500,5000,7500,10000,12500,15000, 17500, 20000},
    legend pos=north west,
    ymajorgrids=true,
    grid style=dashed,
]
 
\addplot[
    color=blue,
    mark=square,
    ]
    coordinates {
    (1,2.23)(1.414,11.7)(2,45.31)(2.83,93.47)(3.16,1024.35)(4,4312)(4.24,10749)
    };
    \addlegendentry{$d=-1$}
		
\addplot[
    color=red,
    mark=square,
    ]
    coordinates {
    (1,1.732)(2,4.12)(2.5,15.5)(3.5,64.4)(4.24,703.5)(4.7,929.4)(4.9,1720)(5.6,2171)(5.8,3289)
    };
    \addlegendentry{$d=-2$}
		
\addplot[
    color=orange,
    mark=square,
    ]
    coordinates {
    (1,4.359)(1.7,48.8)(2,87.8)(2.6,512.3)(3,2252.5)(3.46,19454)
    };
    \addlegendentry{$d=-3$}
		
\addplot[
    color=green,
    mark=square,
    ]
    coordinates {
    (1.414,2.6)(2,12.3)(2.8,67.2)(4,501.5)(5.3,944.7)(5.6,17498)
    };
    \addlegendentry{$d=-7$}

\end{axis}
\end{tikzpicture}
\caption{Moats in Imaginary Quadratic Fields}
\label{fig:moatplot}
\end{figure}
\end{center}

\section{Asymptotic Estimates on the Density of Primes}

In \cite{gethner}, the authors state that it is likely that moats of arbitrarily large size exist, thus making a walk to infinity impossible. Tsuchimura, in \cite{japan}, derives an asymptotic expression for the number of Gaussian primes in the first octant contained within the ellipse $N(x+yi)=x^2+y^2=R^2$. In this section, we derive similar expressions for $\Z[\sqrt{-2}]$ and $\Z[\omega]$, leading us to conjecture that for a given step bound $k$, we can travel farther away from the origin in $\Z[i]$ and $\Z[\omega]$ than in $\Z[\sqrt{-2}]$.

First, we state results classifying the rational primes expressible as $N(x+y\sqrt{-2})=x^2+2y^2$ and $N(x+y\omega)=x^2-xy+y^2$.

\begin{thm}\label{lattice1}An odd prime $p$ can be expressed as $x^2+2y^2$ for integers $x$ and $y$ if and only if $p\equiv 1\pmod{8}$ or $p\equiv 3\pmod{8}$. \end{thm}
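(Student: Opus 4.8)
The plan is to recognize $x^2+2y^2$ as the norm form $N(x+y\sqrt{-2})$ on $\Z[\sqrt{-2}]$ and to exploit the fact, established above via Stark's classification, that this ring is a unique factorization domain. I would handle the two implications separately: the forward direction is an elementary congruence computation, while the converse rests on quadratic reciprocity together with unique factorization.

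For the forward direction, suppose $p=x^2+2y^2$ with $p$ an odd prime. Since $2y^2$ is even, $x$ must be odd, and every odd square is congruent to $1\pmod 8$. Because the squares modulo $8$ lie in $\{0,1,4\}$, the term $2y^2$ reduces to $0$ or $2\pmod 8$ according as $y$ is even or odd. Adding, $p\equiv 1+2y^2\equiv 1$ or $3\pmod 8$, which is exactly the claimed restriction.

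For the converse, I would first show the congruence condition is equivalent to $-2$ being a quadratic residue modulo $p$. Factoring $\left(\frac{-2}{p}\right)=\left(\frac{-1}{p}\right)\left(\frac{2}{p}\right)$ and invoking the supplementary laws $\left(\frac{-1}{p}\right)=(-1)^{(p-1)/2}$ and $\left(\frac{2}{p}\right)=(-1)^{(p^2-1)/8}$, a short case check over the residue of $p$ modulo $8$ shows $\left(\frac{-2}{p}\right)=1$ precisely when $p\equiv 1$ or $3\pmod 8$ (the residues $5$ and $7$ give $-1$). Choosing $m$ with $m^2\equiv -2\pmod p$, we obtain $p\mid m^2+2=(m+\sqrt{-2})(m-\sqrt{-2})$ in $\Z[\sqrt{-2}]$, yet $p$ divides neither factor since $\tfrac{1}{p}\sqrt{-2}\notin\Z[\sqrt{-2}]$. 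Hence $p$ is not prime, so in the UFD $\Z[\sqrt{-2}]$ it is reducible, say $p=\alpha\beta$ with $\alpha,\beta$ non-units. Taking norms gives $p^2=N(\alpha)N(\beta)$ with both factors exceeding $1$, forcing $N(\alpha)=p$; writing $\alpha=x+y\sqrt{-2}$ yields $p=x^2+2y^2$ (and by Theorem~\ref{normprime}, $\alpha$ is automatically prime).

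The main obstacle is this last passage from the residue condition to an actual integral representation: the existence of a square root of $-2$ only produces a nontrivial factorization once the failure of $p$ to remain prime is converted into reducibility via unique factorization, after which multiplicativity of the norm pins each factor's norm to $p$ exactly. The reciprocity bookkeeping and the mod-$8$ square analysis are routine; the conceptual content lies entirely in coupling the arithmetic of $\Z[\sqrt{-2}]$ to the rational congruence, which the UFD property makes available.
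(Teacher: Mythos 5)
Your proof is correct and complete, but note that the paper itself offers no proof of this statement: it simply records Theorems \ref{lattice1} and \ref{lattice2} and refers the reader to \cite{cox}. So there is nothing in the paper to compare against line by line; what you have supplied is the standard self-contained argument. Both directions check out: the mod-$8$ computation for necessity is right ($x$ odd forces $x^2\equiv 1\pmod 8$, and $2y^2\equiv 0$ or $2\pmod 8$), and the case check giving $\left(\tfrac{-2}{p}\right)=1$ exactly for $p\equiv 1,3\pmod 8$ is correct. The key step---converting $p\mid (m+\sqrt{-2})(m-\sqrt{-2})$ with $p$ dividing neither factor into a nontrivial factorization $p=\alpha\beta$, which requires the equivalence of prime and irreducible in the UFD $\Z[\sqrt{-2}]$, and then pinning $N(\alpha)=p$ by multiplicativity of the norm and the fact that non-units have norm exceeding $1$---is exactly the mechanism that makes the converse work, and you have identified it as such. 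Cox's treatment reaches the same conclusion through the reduction theory of binary quadratic forms (class number one for discriminant $-8$) rather than through factorization in $\Z[\sqrt{-2}]$; the two routes are equivalent here, and yours fits more naturally with the ring-theoretic framing the paper uses elsewhere (Theorem \ref{normprime} and the UFD hypothesis). One could quibble that you should state explicitly that $N(\alpha)=1$ if and only if $\alpha$ is a unit in $\Z[\sqrt{-2}]$, but this is standard and implicit in the paper's setup.
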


\begin{thm}\label{lattice2}An odd prime $p$ can be expressed as $x^2-xy+y^2$ for integers $x$ and $y$ if and only if $p\equiv 1\pmod{3}$. \end{thm}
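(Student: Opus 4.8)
The plan is to recognize that $x^2 - xy + y^2 = N(x + y\omega)$ is exactly the norm form of the Eisenstein integers $\Z[\omega] = \OQfield{-3}$, so that the representability question becomes the question of which rational primes occur as norms in this ring. Since $\OQfield{-3}$ is a unique factorization domain, I would translate ``$p$ is a norm'' into ``$p$ is not inert,'' and then read off the congruence condition from the inert-prime classification of Theorem~\ref{inertprimes} together with the quadratic reciprocity reduction already carried out in the Methods section. The analysis parallels the proof of Theorem~\ref{lattice1} for $x^2 + 2y^2$, with the ring $\Z[\sqrt{-2}]$ replaced by $\Z[\omega]$.

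First I would establish that an odd prime $p$ is represented by the form if and only if $p$ fails to be inert in $\Z[\omega]$. In one direction, if $p = N(\pi) = x^2 - xy + y^2$ for $\pi = x + y\omega$, then $p = \pi\bar\pi$ is a factorization in which $\pi$ is a non-unit, since $N(\pi) = p > 1$ while units have norm $1$ by the theorem on unit groups; hence $p$ is reducible and not inert. Conversely, if $p$ is not inert, unique factorization lets me write $p = \alpha\beta$ with both factors non-units, and taking norms gives $p^2 = N(\alpha)N(\beta)$ with $N(\alpha), N(\beta) > 1$, which forces $N(\alpha) = N(\beta) = p$. Thus $p$ is a value of the norm form. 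This equivalence is precisely where the UFD hypothesis on $\OQfield{-3}$ is indispensable.

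Next I would determine exactly when $p$ is inert. For $d = -3$ the discriminant is $\delta = -3$, and by the reciprocity reduction given earlier for $d \equiv 1 \pmod 4$, an odd prime $p \nmid 3$ is inert if and only if $p$ is a quadratic non-residue modulo $|d| = 3$, i.e.\ if and only if $p \equiv 2 \pmod 3$; equivalently, $p$ splits (is non-inert) exactly when $\left(\frac{-3}{p}\right) = +1$, which holds if and only if $p \equiv 1 \pmod 3$. Combining this with the previous paragraph gives representability if and only if $p \equiv 1 \pmod 3$ for every odd $p \neq 3$.

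The reciprocity computation here is routine given the template already in the paper, so I expect the genuinely delicate point to be the ramified prime $p = 3$, which divides $\delta$ and therefore lies outside the hypothesis of Theorem~\ref{inertprimes}. Here $3 = N(2 + \omega) = 2^2 - 2 + 1$ is in fact represented, even though $3 \not\equiv 1 \pmod 3$, so this prime must be handled as a separate boundary case and the clean equivalence stated for $p \neq 3$. I expect the careful bookkeeping around this ramified prime, together with the invocation of unique factorization in the norm-counting step, to be the parts most in need of care; the rest should proceed exactly as in the $d \equiv 1 \pmod 4$ discussion already established.
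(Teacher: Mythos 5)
The paper gives no proof of this theorem; it simply defers to Cox's \emph{Primes of the form $x^2+ny^2$}. Your self-contained argument is correct and is the standard class-number-one route: identify $x^2-xy+y^2$ with the norm form $N(x+y\omega)$ of $\Z[\omega]$, show that an odd prime is represented exactly when it fails to be inert (your unique-factorization step $p=\alpha\beta\Rightarrow p^2=N(\alpha)N(\beta)\Rightarrow N(\alpha)=N(\beta)=p$ is sound because non-units in an imaginary quadratic integer ring have norm greater than $1$), and then read the splitting condition off quadratic reciprocity, $\left(\tfrac{-3}{p}\right)=\left(\tfrac{p}{3}\right)$, giving $p\equiv 1\pmod 3$. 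This buys a proof using only facts the paper already assembles (Theorem~\ref{normprime}, Theorem~\ref{inertprimes}, the unit-group theorem, and the reciprocity reduction for $d\equiv 1\pmod 4$), rather than the general theory of quadratic forms. You also correctly caught that the statement as literally written fails at the ramified prime: $3$ is odd, $3=2^2-2\cdot 1+1^2$, yet $3\not\equiv 1\pmod 3$, so the theorem needs to exclude $p=3$ (or include it on the right-hand side), exactly as your boundary-case discussion indicates. One small caveat: Theorem~\ref{inertprimes} is stated in the paper only as an implication (``inert if $(\delta/p)=-1$''), while your argument uses the converse as well ($p\nmid\delta$ and $p$ non-inert imply $\left(\tfrac{\delta}{p}\right)=1$); that biconditional is standard and true, but you are invoking slightly more than the paper literally asserts, so it deserves an explicit word in a final write-up.
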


Proofs of these are well known and may be found in \cite{cox}.

Tsuchimura's result states that the number of Gaussian primes in the first octant with norm less than or equal to $R$ is asymptotically equal to $\frac{R^2}{4\log R}$. Using similar methods via Dirichlet's theorem and the Prime Number Theorem, along with \eqref{lattice1} and \eqref{lattice2}, we show that this expression holds for the number of primes in all $\OQfield{d}$ with unique factorization.

Let $(\Z/m\Z)^{\times} = \{u_1, u_2,\ldots, u_{\varphi(m)}\}$ denote the unit group modulo $m$, where $\varphi$ is Euler's totient function. Define $\pi_{u_i}(x)$ to be the number of primes $p\le x$ such that $p\equiv u_i\pmod{m}$. Then, Dirichlet's theorem states that $$\pi_{u_1}(x)\sim\pi_{u_2}(x)\sim\cdots\sim\pi_{u_{\varphi(m)}}(x)\sim\frac{1}{\varphi(m)}\cdot\pi(x),$$ where $\pi(x)$ is the prime-counting function. The Prime Number Theorem states that $\pi(x)\sim\frac{x}{\log x}$.

\begin{thm}\label{numprimes-2}The number of primes $x+y\sqrt{-2}$ in $\Z[\sqrt{-2}]$ with $x> 0$, $y\ge 0$ and $N(x+y\sqrt{-2})\le R^2$ can be estimated as $\frac{R^2}{4\log R}$. \end{thm}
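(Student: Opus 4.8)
The plan is to mirror Tsuchimura's argument for Gaussian primes, reducing the count of primes in $\Z[\sqrt{-2}]$ to a count of rational primes in prescribed residue classes, to which Dirichlet's theorem and the Prime Number Theorem apply directly. The first step is to classify the primes of $\Z[\sqrt{-2}]$ by their norms. By Theorem \ref{normprime}, any $\pi = x + y\sqrt{-2}$ whose norm $N(\pi) = x^2 + 2y^2$ is a rational prime is itself prime, and by Theorem \ref{lattice1} these norms are exactly the rational primes $p \equiv 1, 3 \pmod 8$. The remaining primes are the inert rational primes (those $p \equiv 5, 7 \pmod 8$, via Theorem \ref{inertprimes} with $d=-2$, $\delta = -8$), which have norm $p^2$, together with the ramified prime $\sqrt{-2}$ of norm $2$. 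Thus every prime in $\Z[\sqrt{-2}]$ of norm at most $R^2$ is either a \emph{split} prime whose norm is a rational prime $p \le R^2$, an inert rational prime $p \le R$, or the single ramified prime.

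Next I would count the split primes lying in the region $x > 0,\ y \ge 0$, which is the dominant term. Since $\Z[\sqrt{-2}]$ is a UFD with unit group $\{\pm 1\}$ (the theorem on unit groups above), each split rational prime $p$ factors as $\pi\bar\pi$, and the four associate primes above $p$ are $\pm\pi, \pm\bar\pi$. For an odd split prime these are four distinct elements (since $x, y \neq 0$), realizing the Klein four-group action $(x,y)\mapsto(\pm x,\pm y)$ on the orbit, and the half-open region $x > 0,\ y \ge 0$ contains exactly one representative of each such orbit. This establishes a bijection between split rational primes $p \le R^2$ with $p \equiv 1, 3 \pmod 8$ and split primes of norm $\le R^2$ in the search region.

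I would then evaluate this count. As $(\Z/8\Z)^{\times} = \{1,3,5,7\}$ has $\varphi(8) = 4$ elements, Dirichlet's theorem gives that the number of primes $p \le R^2$ with $p \equiv 1 \pmod 8$ is asymptotic to $\tfrac14\pi(R^2)$, and likewise for $p \equiv 3 \pmod 8$; summing the two classes and applying the Prime Number Theorem $\pi(R^2)\sim R^2/\log(R^2) = R^2/(2\log R)$ yields
$$ \frac{2}{4}\,\pi(R^2) \sim \frac{1}{2}\cdot\frac{R^2}{2\log R} = \frac{R^2}{4\log R}. $$
Finally I would bound the non-split contributions. Each inert rational prime $p \equiv 5, 7 \pmod 8$ contributes the single representative $(p,0)$ in the region, of norm $p^2 \le R^2$, i.e. $p \le R$; by Dirichlet and PNT there are $\sim \tfrac12\pi(R) = O(R/\log R)$ of these, while the ramified prime contributes $O(1)$. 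Both are of strictly smaller order than $R^2/\log R$, so the total count is asymptotic to $\frac{R^2}{4\log R}$.

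The main obstacle is the middle step: showing that precisely one associate of each split prime lands in the fundamental region, with the correct boundary conventions. One must check that the degenerate cases ($y = 0$, which would force $p$ a perfect square, and $x = 0$, which would force $p = 2$) never occur among the odd split primes, so the four associates are genuinely distinct and the bijection is exact. The chosen half-open region $x>0,\ y\ge 0$ is exactly what counts each inert prime on the positive $x$-axis once while excluding the ramified prime on the positive $y$-axis, avoiding any double counting. Everything after this reduction is a routine application of the quoted density theorems.
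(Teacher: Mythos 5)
Your proposal is correct and follows essentially the same route as the paper: split primes whose norm is a rational prime $p\equiv 1,3\pmod 8$ with $p\le R^2$, inert rational primes $q\equiv 5,7\pmod 8$ with $q\le R$, then Dirichlet's theorem and the Prime Number Theorem give $\tfrac12\pi(R^2)\sim R^2/(4\log R)$. The paper's proof is terser --- it writes the count directly as $\pi_1(R^2)+\pi_3(R^2)+\pi_5(R)+\pi_7(R)$ without your explicit justification that exactly one of the four prime elements above each split $p$ lands in the region $x>0$, $y\ge 0$, and without separately disposing of the ramified prime --- so your version supplies details the paper leaves implicit rather than taking a different approach.
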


\begin{proof}Due to \eqref{normprime}, \eqref{inertprimes}, and \eqref{lattice1}, primes in $\Z[\sqrt{-2}]$ are elements with norm $p$, where $p$ is a rational prime of the form $8k+1$ or $8k+3$ for $k\in\Z$, and rational primes $q$ of the form $8\ell+5$ or $8\ell+7$ for $\ell\in\Z$. By Dirichlet's theorem, $$\pi_1(x)\sim\pi_3(x)\sim\pi_5(x)\sim\pi_7(x)\sim\frac{\pi(x)}{4}.$$ Thus, the number of primes may be estimated as $$\pi_1(R^2)+\pi_3(R^2)+\pi_5(R)+\pi_7(R)\sim\frac{\pi(R^2)}{2}\sim\frac{R^2}{4\log R},$$ by the Prime Number Theorem. \end{proof}

\begin{cor}\label{dense1} The density of primes $x+y\sqrt{-2}$ in $\Z[\sqrt{-2}]$ with $x>0$, $y\ge 0$ and $N(x+y\sqrt{-2})\le R^2$ can be estimated as $\frac{\sqrt{2}}{\pi\log R}$. \end{cor}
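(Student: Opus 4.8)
The plan is to read the claimed density as the ratio of the number of primes in the region to the total number of algebraic integers lying in the same region, and to obtain the denominator by a Gauss-type lattice-point count. Theorem \ref{numprimes-2} already supplies the numerator: the number of primes $x+y\sqrt{-2}$ with $x>0$, $y\ge 0$ and $N(x+y\sqrt{-2})\le R^2$ is asymptotic to $\frac{R^2}{4\log R}$. Hence the only genuinely new computation is to count \emph{all} integers $x+y\sqrt{-2}$ satisfying the same three constraints.

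First I would identify the relevant region in the integer $(x,y)$-plane. Since $N(x+y\sqrt{-2})=x^2+2y^2$, the constraint $x^2+2y^2\le R^2$ together with $x>0$, $y\ge 0$ describes the first-quadrant portion of the ellipse with semi-axis $R$ along the $x$-axis and $R/\sqrt{2}$ along the $y$-axis. The integers $x+y\sqrt{-2}$ correspond exactly to the lattice $\Z^2$ in these coordinates, each lattice point occupying a unit cell of area $1$, so the number of such integers is asymptotic to the area of the region. That area is one quarter of the full ellipse, namely $\tfrac{1}{4}\cdot\pi\cdot R\cdot\tfrac{R}{\sqrt{2}}=\frac{\pi R^2}{4\sqrt{2}}$.

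I would then form the density as the quotient of the two counts,
$$\frac{\#\{\text{primes in the region}\}}{\#\{\text{integers in the region}\}}\sim\frac{R^2/(4\log R)}{\pi R^2/(4\sqrt{2})}=\frac{\sqrt{2}}{\pi\log R},$$
which is precisely the asserted estimate, so the corollary follows by a single division once the lattice count is in place.

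The main obstacle is making the lattice-point count rigorous, that is, justifying that the number of integer points in the elliptical region equals its area up to a lower-order error. This is the standard Gauss-circle estimate: the discrepancy between the count and the area is bounded by the number of unit cells meeting the boundary of the region, which is $O(R)$, and is therefore negligible against the leading area term of order $R^2$. The edge effects introduced by the conditions $x>0$ and $y\ge 0$ along the coordinate axes are likewise $O(R)$ and do not disturb the asymptotics. With this in hand the division above is valid and yields the stated density.
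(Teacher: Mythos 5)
Your proposal is correct and takes essentially the same route as the paper: both divide the prime count $\frac{R^2}{4\log R}$ from Theorem \ref{numprimes-2} by the quarter-ellipse area $\frac{\pi R^2}{4\sqrt{2}}$ to obtain $\frac{\sqrt{2}}{\pi\log R}$. The only difference is cosmetic --- you read the denominator as a lattice-point count and justify its agreement with the area by a Gauss-type boundary estimate, while the paper defines density per unit area directly; since the lattice $\Z^2$ has unit covolume in the $(x,y)$-coordinates, the two readings coincide.
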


\begin{proof} The area of $\frac{1}{4}$ of the ellipse $x^2+2y^2=R^2$ is $A=\frac{\pi R^2}{4\sqrt{2}}$, so the density of primes is $\frac{1}{A}\cdot\frac{R^2}{4\log R}=\frac{\sqrt{2}}{\pi\log R}$. \end{proof}

\begin{thm}\label{numprimes-3}The number of primes $x+y\omega$ in $\Z[\omega]$ with $x>0$, $y\ge 0$ and $N(x+y\omega)\le R^2$ can be estimated as $\frac{R^2}{4\log R}$. \end{thm}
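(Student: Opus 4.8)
The plan is to follow the proof of \eqref{numprimes-2} almost verbatim, with the modulus $8$ replaced by the modulus $3$ that governs splitting in $\Z[\omega]$. First I would classify the Eisenstein primes by combining \eqref{normprime}, \eqref{inertprimes}, and \eqref{lattice2}. Here $\delta=-3$, so the quadratic-reciprocity reduction already carried out in the Methods section shows that a rational prime $q$ is inert precisely when $q\equiv 2\pmod 3$; such a $q$ sits at the lattice point $(q,0)$ and has norm $N(q)=q^2$. By \eqref{lattice2} together with \eqref{normprime}, a rational prime $p\equiv 1\pmod 3$ splits as $p=N(x+y\omega)=x^2-xy+y^2$, each such $p$ yielding an Eisenstein prime of norm $p$. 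The single rational prime $3$ ramifies and contributes only $O(1)$ primes, so it is negligible.

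Next I would apply Dirichlet's theorem with $m=3$. Since $\varphi(3)=2$ and $(\Z/3\Z)^{\times}=\{1,2\}$, we have $\pi_1(x)\sim\pi_2(x)\sim\tfrac12\pi(x)$. In the appropriate fundamental region the split primes contribute one Eisenstein prime for each $p\equiv 1\pmod 3$ with $p\le R^2$, i.e.\ $\pi_1(R^2)$ of them, while the inert primes contribute one for each $q\equiv 2\pmod 3$ with $q^2\le R^2$, i.e.\ $\pi_2(R)$ of them. The inert term satisfies $\pi_2(R)\sim\tfrac{R}{2\log R}$, which is of smaller order than $R^2/\log R$ and hence negligible, so by the Prime Number Theorem the total is
\[
\pi_1(R^2)\sim\tfrac12\pi(R^2)\sim\frac{R^2}{2\log(R^2)}=\frac{R^2}{4\log R},
\]
the claimed estimate. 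It is worth noting that this agrees with the $\Z[\sqrt{-2}]$ count because both fields split exactly half of the rational primes, even though $\Z[\sqrt{-2}]$ splits two of the four residue classes modulo $8$ while $\Z[\omega]$ splits one of the two classes modulo $3$.

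The step I expect to be the main obstacle, and the only genuine point of departure from the $\Z[\sqrt{-2}]$ argument, is fixing the correct multiplicity, i.e.\ guaranteeing that the counting region holds exactly one associate--conjugate representative of each split rational prime. For $\Z[\sqrt{-2}]$ the $4$-fold symmetry makes the lattice quadrant $x>0,\,y\ge 0$ a fundamental domain, so each split $p$ is counted once. For $\Z[\omega]$, however, the unit group has order $6$ and, with conjugation, produces $12$-fold symmetry; under the map $\mathcal{T}$ the lattice quadrant $x>0,\,y\ge 0$ becomes the $120^{\circ}$ Cartesian sector, which holds four of the twelve symmetric copies of each split prime and would therefore overcount by a factor of $4$. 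To land on $\tfrac{R^2}{4\log R}$ one must instead count in the genuine fundamental sector bounded by $y=0$ and $y=\tfrac12 x$ (the image under $\mathcal{T}^{-1}$ of the Cartesian wedge $0\le y\le x/\sqrt3$), where each split prime has a unique representative. Verifying this normalization carefully is what makes the leading constant equal $1/4$.
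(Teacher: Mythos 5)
Your argument is essentially the paper's own proof: the same classification of Eisenstein primes via \eqref{normprime}, \eqref{inertprimes}, and \eqref{lattice2}, the same application of Dirichlet's theorem modulo $3$, and the same count $\pi_1(R^2)+\pi_2(R)\sim\frac{\pi(R^2)}{2}\sim\frac{R^2}{4\log R}$, with the inert contribution absorbed as a lower-order term. The one place you go beyond the paper is your closing discussion of multiplicity, and you are right to press on it: writing the main term as $\pi_1(R^2)$ tacitly assumes that the counting region contains exactly one associate--conjugate representative of each split rational prime. That assumption holds for $\Z[\sqrt{-2}]$, where $x>0$, $y\ge 0$ is a genuine fundamental domain for the $4$-fold symmetry, but it fails for the region as literally stated here: in lattice coordinates the quadrant $x>0$, $y\ge 0$ is a $120^{\circ}$ sector of the Eisenstein plane and therefore contains four of the twelve elements of norm $p$ for each split $p$, so the count over that region is $\sim 4\pi_1(R^2)\sim\frac{R^2}{\log R}$. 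Your proposed repair --- restricting to the $30^{\circ}$ fundamental sector $0\le y\le\frac{1}{2}x$, the exact analogue of Tsuchimura's first octant for $\Z[i]$ --- is the correct normalization and is what actually produces the constant $\frac{1}{4}$; the paper's proof does not address this point, and its stated region should be read (or restated) accordingly.
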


\begin{proof}Due to \eqref{normprime}, \eqref{inertprimes}, and \eqref{lattice2}, primes in $\Z[\omega]$ are elements with norm $p$, where $p$ is a rational prime of the form $3k+1$ for $k\in\Z$, and rational primes $q$ of the form $3\ell+2$, for $\ell\in\Z$. By Dirichlet's theorem, $$\pi_1(x)\sim\pi_2(x)\sim\frac{\pi(x)}{2}.$$ Thus, the number of primes may be estimated as $$\pi_1(R^2)+\pi_2(R)\sim\frac{\pi(R^2)}{2}\sim\frac{R^2}{4\log R},$$ by the Prime Number Theorem. \end{proof}

\begin{cor}\label{dense2} The density of primes $x+y\omega$ in $\Z[\omega]$ with $x>0$, $y\ge 0$ and $N(x+y\omega)\le R^2$ can be estimated as $\frac{3\sqrt{3}}{2\pi\log R}$. \end{cor}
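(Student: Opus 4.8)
The plan is to follow the template of Corollary~\ref{dense1} verbatim: the density is the prime count supplied by Theorem~\ref{numprimes-3} divided by the area of the region in which those primes are counted. Since Theorem~\ref{numprimes-3} already hands us the numerator $\frac{R^2}{4\log R}$, the entire task reduces to computing the area of the relevant portion of the norm-disk $N(x+y\omega)=x^2-xy+y^2\le R^2$ and then performing a single division.

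First I would compute the area enclosed by the full curve $x^2-xy+y^2=R^2$. Unlike the $\Z[\sqrt{-2}]$ case, where $x^2+2y^2=R^2$ is axis-aligned and its semi-axes can be read off directly, here the cross term $-xy$ tilts the ellipse, so I would diagonalize the associated symmetric matrix $M=\begin{pmatrix} 1 & -\tfrac12 \\ -\tfrac12 & 1 \end{pmatrix}$. Its eigenvalues are $\tfrac12$ and $\tfrac32$, so an orthogonal change of coordinates converts the form into $\tfrac12 u^2+\tfrac32 v^2=R^2$; equivalently, since $\det M=\tfrac34$, the enclosed area is $\frac{\pi R^2}{\sqrt{\det M}}=\frac{2\pi R^2}{\sqrt3}$.

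Next I would cut this total down to the fundamental region over which Theorem~\ref{numprimes-3} actually enumerates primes. By the theorem on unit groups, $\Z[\omega]$ carries $12$-fold symmetry (the six units together with conjugation), and the counting argument of Theorem~\ref{numprimes-3}, which records essentially one element of norm $p$ per split rational prime, is naturally a count over a single fundamental domain of this symmetry group. Hence the area to divide by is $\tfrac1{12}$ of the full ellipse, namely $A=\frac{1}{12}\cdot\frac{2\pi R^2}{\sqrt3}=\frac{\pi R^2}{6\sqrt3}$, and then
$$\frac{1}{A}\cdot\frac{R^2}{4\log R}=\frac{6\sqrt3}{\pi R^2}\cdot\frac{R^2}{4\log R}=\frac{3\sqrt3}{2\pi\log R},$$
which is exactly the claimed density.

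The hard part will be pinning down this fractional area, not the prime count, which is given to us. In the $\Z[\sqrt{-2}]$ corollary the ellipse is axis-aligned and the $4$-fold symmetry makes the counting region an honest quarter of the ellipse, so the area is immediate. Here one must both handle the tilt (via diagonalization, or directly through the determinant formula) and correctly match the symmetry fraction $\tfrac1{12}$ to the domain over which the primes were counted; conflating the literal lattice quadrant $x>0,\,y\ge0$ with the symmetry fundamental domain would alter the fraction of the ellipse used and hence change the constant. Once $A=\frac{\pi R^2}{6\sqrt3}$ is established, the remainder is the one-line division displayed above.
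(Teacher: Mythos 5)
Your proposal is correct and takes essentially the same route as the paper: the paper's proof is the one-line computation ``the area of $\frac{1}{12}$ of the ellipse $x^2-xy+y^2=R^2$ is $A=\frac{\pi R^2}{6\sqrt{3}}$, so the density is $\frac{1}{A}\cdot\frac{R^2}{4\log R}=\frac{3\sqrt{3}}{2\pi\log R}$,'' and your diagonalization of $M$ merely fills in the area calculation that the paper states without justification. Your closing remark about matching the $\frac{1}{12}$ symmetry fraction to the domain actually counted in Theorem~\ref{numprimes-3} (rather than the literal region $x>0$, $y\ge 0$ named in the statement) is a subtlety the paper glosses over, and you handle it at least as carefully as the original.
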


\begin{proof} The area of $\frac{1}{12}$ of the ellipse $x^2-xy+y^2=R^2$ is $A=\frac{\pi R^2}{6\sqrt{3}}$, so the density of primes is $\frac{1}{A}\cdot\frac{R^2}{4\log R}=\frac{3\sqrt{3}}{2\pi\log R}$. \end{proof}

Note that if $\Q(\sqrt{d})$ is an imaginary quadratic field (other than $\Q(i)$ and $\Q(\sqrt{-2})$) and its ring of integers $\OQfield{d}$ is a unique factorization domain, we can make a more general statement due to results from \cite{cox}. In general, a prime $p$ is expressible as a quadratic form with discriminant $\delta$ equal to that of the discriminant of the unique factorization domain $\OQfield{d}$ if and only if $\left(\frac{p}{\delta}\right) = 1$. Note that $\delta$ is prime, so we may write $(\Z/\delta\Z)^{\times}=\{1,2,\ldots,\delta-1\}$. Now, let $A$ denote the set of quadratic residues in $(\Z/\delta\Z)^{\times}$ and let $B$ denote the set of quadratic non-residues. Then, $|A|=|B|=\frac{\delta-1}{2}$.

This leads us to state the following theorem.

\begin{thm}\label{numprimes-d} If $\OQfield{d} = \Z[\tau]$ is a unique factorization domain with $d<0$, the number of primes $x+y\tau$ with $x>0$, $y\ge 0$ and $N(x+y\tau)\le R^2$ can be estimated as $\frac{R^2}{4\log R}$. \end{thm}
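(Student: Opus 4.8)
The plan is to run the same argument as in Theorems \ref{numprimes-2} and \ref{numprimes-3}, with the prime $|\delta|$ playing the role that $8$ and $3$ played there. First I would split the primes of $\Z[\tau]$ into two families according to their norm. By Theorems \ref{normprime} and \ref{inertprimes} together with the characterization recalled above (a rational prime $p$ is represented by the norm form of $\OQfield{d}$ exactly when $\left(\frac{p}{\delta}\right)=1$), every prime of $\Z[\tau]$ is either (i) a \emph{split} prime $\pi$ whose norm is a rational prime $p$ with $p\bmod|\delta|\in A$, so that $N(\pi)=p$, or (ii) an \emph{inert} rational prime $q$ with $q\bmod|\delta|\in B$, so that $N(q)=q^2$. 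Thus a split prime is counted once $p\le R^2$, while an inert prime is counted once $q^2\le R^2$, i.e. $q\le R$.

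Next I would apply Dirichlet's theorem to the modulus $|\delta|$. Because $|\delta|$ is an odd prime, the $|\delta|-1$ reduced residue classes divide evenly into the quadratic residues $A$ and non-residues $B$ with $|A|=|B|=\frac{|\delta|-1}{2}$, and $\pi_u(x)\sim\pi(x)/(|\delta|-1)$ for each class $u$. Summing the split contribution over the residue classes in $A$ up to $R^2$ gives
$$\sum_{u\in A}\pi_u(R^2)\ \sim\ \frac{|\delta|-1}{2}\cdot\frac{\pi(R^2)}{|\delta|-1}\ =\ \frac{\pi(R^2)}{2},$$
while summing the inert contribution over $B$ up to $R$ gives $\sum_{u\in B}\pi_u(R)\sim\frac{\pi(R)}{2}=O(R/\log R)$, which is of strictly lower order. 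By the Prime Number Theorem $\pi(R^2)\sim R^2/\log(R^2)=R^2/(2\log R)$, so the total is asymptotic to $\tfrac12\pi(R^2)\sim\frac{R^2}{4\log R}$, as required. This is exactly the structure of the proofs of Theorems \ref{numprimes-2} and \ref{numprimes-3}, and it works uniformly because the quadratic-residue count always supplies precisely half of the rational primes.

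The step demanding the most care — and the main obstacle — is the bookkeeping that justifies counting rational primes in place of elements of $\Z[\tau]$ within the prescribed region. Using the symmetry from Theorem \ref{unitgroup} (the units $\{\pm1\}$ together with conjugation), I would argue that the fundamental region determined by these symmetries contains exactly one prime element for each relevant rational prime: a split prime $p=\pi\bar\pi$ has its orbit $\{\pm\pi,\pm\bar\pi\}$ meeting the region once, and an inert prime $q$ is real with orbit $\{\pm q\}$ meeting the region once. One must check that the region $x>0,\ y\ge0$, after accounting for how conjugation acts in the $\Z$-basis $\{1,\tau\}$ (namely $\bar\tau=-1-\tau$), is matched to this fundamental domain, so that no rational prime is over- or under-counted; this is the only point where the geometry of $\Z[\tau]$, rather than pure analytic number theory, enters. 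The finitely many ramified primes dividing $\delta$ contribute $O(1)$ and are absorbed into the asymptotic. Finally I would observe that this uniform treatment applies to exactly the remaining unique factorization domains with $d\equiv1\pmod4$, for which $\delta=d$ is an odd prime, the cases $d=-1,-2$ (where $|\delta|=4,8$ is not prime) having been handled separately in Theorem \ref{numprimes-2} and by Tsuchimura's result for $\Z[i]$.
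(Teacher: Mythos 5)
Your proposal follows essentially the same route as the paper's proof: classify the primes of $\Z[\tau]$ into those of split norm (counted up to $R^2$ over the residue classes in $A$) and inert rational primes (counted up to $R$ over $B$), then apply Dirichlet's theorem modulo $|\delta|$ and the Prime Number Theorem to get $\tfrac{1}{2}\pi(R^2)\sim\frac{R^2}{4\log R}$. Your additional bookkeeping about the fundamental domain, the action of conjugation, and the ramified primes supplies justification for a counting step the paper leaves implicit, but it does not change the underlying argument.
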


\begin{proof}Due to \eqref{normprime}, \eqref{inertprimes}, and the above, primes in $\Z[\tau]$ are elements with norm $p$, where $\left(\frac{p}{\delta}\right)=1$, and rational primes $q$ such that $\left(\frac{q}{\delta}\right)=-1$. By Dirichlet's theorem, $$\pi_1(x)\sim\pi_2(x)\sim\cdots\sim\pi_{\delta-1}(x)\sim\frac{\pi(x)}{\delta-1}.$$ Thus, the number of primes may be estimated as $$\sum_{u_i\in A}\pi_{u_i}(R^2)+\sum_{u_j\in B}\pi_{u_j}(R)\sim \frac{\delta-1}{2}\cdot\frac{\pi(R^2)}{\delta-1}\sim\frac{R^2}{4\log R},$$ by the Prime Number Theorem. \end{proof}

Due to Tsuchimura, the density of primes in $x+yi\in\Z[i]$ with $x>0$, $y\ge 0$, and $N(x+yi)\le R^2$ is $\frac{2}{\pi\log R}$. We have $$\frac{\sqrt{2}}{\pi\log R}<\frac{2}{\pi\log R}<\frac{3\sqrt{3}}{2\pi\log R},$$ so by \eqref{dense1} and \eqref{dense2}, we may conjecture that for a given step bound $k$, one can travel the farthest in $\Z[\omega]$, the next farthest in $\Z[i]$ and the least in $\Z[\sqrt{-2}]$. Note that as $|d|$ becomes larger, the area of the ellipse $x^2-xy-\frac{d-1}{4}y^2=R^2$ decreases. Thus, by \eqref{numprimes-d}, we may also conjecture that the farthest distance reachable with a given step bound $k$ increases as $d$ decreases with the constraint that $\OQfield{d}$ is a unique factorization domain.

\section{Conclusion and Future Work}

In this paper we explored new imaginary quadratic fields exhibiting unique factorization for moats. We developed mappings between $\OQfield{d}$ and $\R^2$, and used symmetry arguments to reduce the set of primes that need to be explored to find moats. We also proposed a new and efficient algorithm by mapping this problem to the Euclidean minimum spanning tree problem with a time complexity of $O(|\mathbf{P}|\log |\mathbf{P}|)$ time, where $\mathbf{P}$ denotes the set of primes we are searching through. While the underlying algorithms -- including those to identify the triangulation and the minimum spanning tree to find a path with the minimum maximal edge length -- are known, our algorithm appears to be the first to apply it explicitly to find all moats, in increasing order of length, along with the associated farthest prime. Further to deal with an underlying infinite unbounded graph with infinite nodes (primes), the algorithm uses symmetry and boundaries to correctly and efficiently identify moats.

It uses a Euclidean minimum spanning tree (EMST) to find moats. It uses Delaunay triangulation and Kruskal's algorithm to solve the EMST thereby bringing the overall time complexity to $O(|\mathbf{P}|\log |\mathbf{P}|)$  where $\mathbf{P}$ is the set of primes considered.


We also calculated the density of primes in $\Z[i]$, $\Z[\sqrt{-2}]$, and $\Z[\omega]$. Based on this and our computational moat results, for a given step size $k$, the farthest distance reachable appears to be greatest in $\Z[\omega]$, second greatest in $\Z[i]$, and least in $\Z[\sqrt{-2}]$. $\Z\left[\frac{-1+\sqrt{-7}}{2}\right]$ is harder to analyze, since the computational results do not suggest any discernible trend.


Some directions for future development include the following:

\begin{itemize}

\item Compare moat results for the imaginary quadratic fields with $d=-1,-2,-3,-7,-11$ to those with $d=-19,-43,-67,-163$, the former group containing those that are both Euclidean (has a division algorithm) and unique factorization domains (UFDs) and the latter containing those that are UFDs but non-Euclidean.

\item Investigate imaginary quadratic fields that are not necessarily Euclidean or UFDs. Here, one would have to look at the splitting of prime ideals and represent these geometrically.

\item Investigate real quadratic fields ($d>0$). Here, the geometric interpretation is less intuitive, since these contain infinitely many units.	

\item Investigate a more general formulation of the moat problem that is not restricted to prime elements: if $S\subseteq\Z^+$ and $T=\{\alpha\in\OQfield{d}, d<0 : N(\alpha)\in S\}$, how do moats in $T$ behave?

\item Exploit computationally hard problems for cryptography. For example, given an arbitrary prime in $\OQfield{d}$, finding the associated moat appears hard.

\end{itemize}

\section{Acknowledgements}

I would like to thank Dr. Daniel Shapiro and the Ross Mathematics program at the Ohio State University. As a result of my two summers at this pivotal program as a student and a Junior Counselor, I have been inspired to delve deeply into various exciting fields in mathematics. I would also like to thank Dr. Simon Rubinstein-Salzedo for reviewing my paper and giving me invaluable feedback and suggestions.

Finally, I would like to thank my family for unconditionally supporting my explorations.

\end{document}